\newtheorem{theorem}{Theorem}
\newtheorem{definition}[theorem]{Definition}
\newtheorem{remark}[theorem]{Remark}
\newenvironment{proof}[1][Proof]{\textbf{#1.} }{\ \rule{0.5em}{0.5em}}
\begin{document}

\title{Second-order nonstandard finite difference schemes for a class of models in bioscience}
\author{Roumen Anguelov$^1$ and Jean M.-S.  Lubuma$^2$\\
$^1$ Department of Mathematics and Applied Mathematics\\
University of Pretoria, Pretoria, South Africa\\
Email: roumen.anguelov@up.ac.za\\
$^2$School of Computer Science and Applied Mathematics\\
University of the Witwatersrand, Johannesburg, South Africa\\
Email: jean.lubuma@wits.ac.za}
\date{}
\maketitle

\begin{abstract}
\noindent We consider a dynamical system, defined by a system of autonomous differential equations, on $\Omega\subset\mathbb{R}^n$. By using Mickens' rule on the nonlocal approximation of nonlinear terms, we construct an implicit Nonstandard Finite Difference (NSFD) scheme that, under an  existence and uniqueness condition, is an explicit time reversible scheme. Apart from being elementary stable, we show that the  NSFD scheme is of second-order and domain-preserving, thereby solving a pending problem on the construction of higher-order nonstandard schemes without spurious solutions, and extending the tangent condition to discrete dynamical systems. It is shown that the new scheme applies directly for mass action-based models of biological and chemical processes.
\end{abstract}

\section{Introduction}

The general setting of this work is a dynamical system, on the set $\Omega\subset\mathbb{R}^n$,  defined by the following system of autonomous ordinary differential equations:
\begin{equation}\label{ODEs}
\frac{dx}{dt}\equiv \dot{x}=f(x).
\end{equation}
The solutions, $x(t)$, of the continuous system are approximated  at the discrete times, $t_k=hk$, $h>0$, $k=0,1,2, \cdots$, by a sequence $(x_k)$ obtained recursively through a numerical method of the following form:
\begin{equation}\label{NumScheme}
x_{k+1}=F(h,x_k)\equiv F(h)(x_k).
\end{equation}
For both the continuous and the discrete systems, it is assumed that the functions $f\equiv f(x):\mathbb{R}^{n}\rightarrow\mathbb{R}^n$ and $F\equiv F(h,x)\equiv F(h)(x):\mathbb{R}\times\mathbb{R}^{n}\rightarrow\mathbb{R}^n$ are as smooth as needed. Naturally,  the consistency requirements,
\begin{equation}\label{consistency}
F(0)(x)=x \mbox{ and } \frac{dF(0)}{dh}(x)=f(x),
\end{equation}
are assumed to hold, and thus we suppose that  $F$ is at least continuously differentiable on $h$.
Our aim is that the numerical method (\ref{NumScheme}) be reliable in the three important directions below.
The numerical method must be second-order convergent, elementary stable, and
preserve the property of having the set $\Omega$ forward invariant with respect to the continuous system.

The construction of higher-order NSFD schemes that are dynamically consistent with the underlying features of the continuous differential equations models, particularly for those with  transient dynamics, is a pending problem. Several authors have attempted to address this problem. These include the nonstandard versions of the classical $\theta$-method, with $\theta=1/2$,  developed in
\cite{Anguelov_ber,Anguelov_Dumont,Anguelov:2005,Lub-Roux:2003} where the positivity of the discrete solution is achieved by also using Mickens' rule on a complex denominator function for the discrete derivative. In the same vein, we mention the recent work \cite{Hristo}, valid for a scalar differential equation, where the second-order accuracy and elementary stability are achieved by a modified nonstandard $\theta$ method, $0\leq\theta\leq 1$, in which the complex denominator function varies at each iteration. \\

\noindent In this paper, we deal with numerical methods characterized as follows \cite[Section V.1]{Hairer2006}:
\begin{definition}\label{def}
A numerical scheme (\ref{NumScheme}) is called symmetric or time-reversible if
\begin{equation}\label{reverse}
x_k=F(-h,x_{k+1}).
\end{equation}
More generally, the scheme is said to be reversible for all $x$ whenever $y=F(h,x)$ implies that $x=F(-h,y)$.
\end{definition}
We construct a NSFD scheme that is time-reversible and show that, in essence, this property makes our scheme of second-order accuracy and elementary stable (Section 2). The same property is used to establish a discrete analogue of the tangent condition for forward invariance of a convex set, which implies that the domain under consideration is positively invariant with respect to our NSFD scheme (Section 3). The case of mass action models of biological and chemical processes is considered in details with Kermack-McKendrik  Susceptible-Infective (\textit{SI}) epidemic model \cite{Heth00}, as an illustrative example (Section 4). Concluding remarks, with possible extensions of this work,  are given in Section~5
\section{Nonstandard finite difference method of order 2}

In the construction of NSFD schemes, Mickens \cite{Mickens:1994} made an important observation namely, that the discrete models of differential equations have a larger parameter space than the corresponding differential equations. Adding to this Mickens' Rule 1, which says that the orders of the discrete derivatives must be exactly equal to the orders of the corresponding derivatives of the differential equations, it  is convenient for nonlocal approximations to view the right-hand side of the system (\ref{ODEs}) as a restriction, on the diagonal $z=y$, of a certain function of two variables $\varphi\equiv\varphi(y,z)$ such that $f(x)=\varphi(x,x)$ and hence the system takes the form
\begin{equation}\label{CDSNonLoc}
\dot{x}=\varphi(x,x), \ \ x\in\Omega\subseteq\mathbb{R}^n.
\end{equation}
We consider the numerical method,
\begin{equation}\label{NonLocGen}
\frac{x_{k+1}-x_k}{h}=\frac{1}{2}(\varphi(x_{k+1},x_k)+\varphi(x_k,x_{k+1})),
\end{equation}
which is a NSFD scheme \cite{AngLub00}, on which only the nonlocal approximation of the right-hand side of (\ref{CDSNonLoc}) is used, the rule on the complex denominator function of the derivatives being excluded. The NSFD scheme (\ref{NonLocGen}) is implicit. The existence and uniqueness of solution of the respective system of algebraic equation is an issue which requires attention on its own. To make this NSFD scheme a generalized dynamical system on $\Omega$ \cite{StuHum98}, we will assume here that
\begin{equation}\label{AssumptionUniqueSol}
\begin{tabular}{l}
(i) given $x_k\in\Omega$, equation (\ref{NonLocGen}) has a unique solution $x_{k+1}$ in $\Omega$;\\
(ii) different values of  $x_k$ result in different solutions for $x_{k+1}$.
\end{tabular}
\end{equation}
Finding general conditions for (\ref{AssumptionUniqueSol}) to hold is challenging, as it is the case for general equations of the form $G(x,y)=0$ solved by using for instance the implicit function theorem, sophisticated fixed-point iterations, \textit{etc.} However, in particular settings, as the ones considered in the sequel such conditions are relatively easy to formulate.

\begin{theorem}\label{theo1}
Assume that (\ref{AssumptionUniqueSol}) holds. Then the
\begin{itemize}
\item[a)] The NSFD scheme (\ref{NonLocGen}) is time-reversible;
\item[b)] The fixed points of the NSFD scheme (\ref{NonLocGen}) are precisely the equilibrium points of the system (\ref{CDSNonLoc});
\item[c)] The NSFD scheme (\ref{NonLocGen}) replicates the local stability of all hyperbolic equilbrium points of (\ref{CDSNonLoc});
\item[d)] The NSFD scheme (\ref{NonLocGen}) is a second-order scheme for the equation (\ref{CDSNonLoc}).
\end{itemize}
\end{theorem}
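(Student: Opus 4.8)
The plan is to dispatch (a) and (b) directly from the algebraic form of (\ref{NonLocGen}) and the uniqueness in (\ref{AssumptionUniqueSol}), obtain (c) by linearising the scheme about an equilibrium, and deduce (d) from (a) via the classical order theory of symmetric methods. For (b), a point $x^*$ is a fixed point of $F(h)$ exactly when putting $x_{k+1}=x_k=x^*$ in (\ref{NonLocGen}) is consistent, i.e. $\tfrac12(\varphi(x^*,x^*)+\varphi(x^*,x^*))=\varphi(x^*,x^*)=f(x^*)=0$, which is the equilibrium condition for (\ref{CDSNonLoc}). For (a), suppose $y=F(h,x)$, so $\tfrac{y-x}{h}=\tfrac12(\varphi(y,x)+\varphi(x,y))$. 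Because the right-hand side is unchanged when its two arguments are interchanged and $\tfrac{x-y}{-h}=\tfrac{y-x}{h}$, the pair $(y,x)$ satisfies $\tfrac{x-y}{-h}=\tfrac12(\varphi(x,y)+\varphi(y,x))$, which is precisely (\ref{NonLocGen}) with step $-h$ and starting value $y$; the uniqueness in (\ref{AssumptionUniqueSol}) (equivalently, the injectivity of $F(h)$ from (\ref{AssumptionUniqueSol})(ii)) then forces $F(-h,y)=x$, so (\ref{reverse}) holds.

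For (c), fix a hyperbolic equilibrium $x^*$ and set $J:=Df(x^*)$; writing $\varphi_1,\varphi_2$ for the partial derivatives of $\varphi$ in its first and second slot, we have $J=\varphi_1(x^*,x^*)+\varphi_2(x^*,x^*)$. Differentiating $F(h,x)-x=\tfrac h2\big(\varphi(F(h,x),x)+\varphi(x,F(h,x))\big)$ in $x$ at $x=x^*$, using $F(h,x^*)=x^*$, and writing $A:=D_xF(h,x^*)$ yields $A-I=\tfrac h2 J(A+I)$. Since $F$ is smooth, this linear relation must admit a solution $A$, which is possible only if $I-\tfrac h2 J$ is nonsingular (otherwise, testing against a left eigenvector of $J$ for the eigenvalue $2/h$ gives a contradiction); hence $A=(I-\tfrac h2 J)^{-1}(I+\tfrac h2 J)$ is the Cayley transform of $\tfrac h2 J$. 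The M\"obius map $\lambda\mapsto(1+h\lambda/2)/(1-h\lambda/2)$ carries the imaginary axis onto the unit circle and the open left (right) half-plane into the open unit disk (its exterior); since hyperbolicity of $x^*$ means no eigenvalue of $J$ is purely imaginary, the eigenvalues of $A$ lie inside the unit circle precisely when those of $J$ lie in the left half-plane, and outside precisely when they lie in the right half-plane. Thus, for every $h>0$, $x^*$ has the same stability type for (\ref{NumScheme}) as it does for (\ref{CDSNonLoc}).

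For (d), I first record that (\ref{NonLocGen}) satisfies the consistency conditions (\ref{consistency}): at $h=0$ the defining relation gives $F(0,x)=x$, and differentiating it in $h$ at $h=0$ gives $\tfrac{dF(0)}{dh}(x)=\tfrac12(\varphi(x,x)+\varphi(x,x))=f(x)$, so the scheme is of order $\ge 1$. By (a) the scheme is symmetric, and symmetric one-step methods have only even-order terms in their local-error expansion \cite[Section~V.1]{Hairer2006}; combined with order $\ge 1$ this gives order $\ge 2$, i.e. a second-order scheme. If a self-contained verification is preferred, one expands $y=F(h,x)=x+hg$ with $g=\tfrac12(\varphi(y,x)+\varphi(x,y))$, uses $\varphi(y,x)=f(x)+\varphi_1(x,x)(y-x)+O(h^2)$ and $\varphi(x,y)=f(x)+\varphi_2(x,x)(y-x)+O(h^2)$ to obtain $g=f(x)+\tfrac h2 Df(x)f(x)+O(h^2)$, whence $F(h,x)=x+hf(x)+\tfrac{h^2}{2}Df(x)f(x)+O(h^3)$, which agrees with the Taylor expansion of the exact flow of (\ref{CDSNonLoc}) up to and including the $h^2$ term; the local truncation error is therefore $O(h^3)$.

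The easy parts are (a) and (b), and the conceptual heart of the theorem is that (d) is essentially a corollary of (a). The one step that needs genuine care is the invertibility of $I-\tfrac h2 Df(x^*)$ in (c): I would derive it, as above, from the smoothness of $F$ rather than imposing a step-size restriction, and then carry out the elementary but necessary check that the Cayley transform maps the half-planes as claimed. A secondary caveat in (d) is that the principle ``symmetric $\Rightarrow$ even order'' presupposes an asymptotic expansion of the local error in powers of $h$, which is available here because $\varphi$ and $F$ are smooth; the explicit Taylor computation above removes any doubt on this point.
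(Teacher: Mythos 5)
Your proof is correct and follows the same overall architecture as the paper's: parts (a) and (b) read off directly from the symmetry of the right-hand side of (\ref{NonLocGen}) together with the uniqueness in (\ref{AssumptionUniqueSol}); part (c) linearises the implicit relation (\ref{EqF}) at the fixed point to obtain $\bigl(I-\tfrac h2 Df(\overline{x})\bigr)\,\partial_x F(h,\overline{x})=I+\tfrac h2 Df(\overline{x})$ and then uses the M\"obius map $\lambda\mapsto(1+\tfrac h2\lambda)/(1-\tfrac h2\lambda)$; part (d) is an $O(h^3)$ local-error estimate. Two local differences are worth noting. In (c), the paper justifies that neither $1-\tfrac h2\lambda$ nor $1+\tfrac h2\lambda$ vanishes by asserting that $\partial_x F(h,\overline{x})$ is nonsingular ``by (\ref{AssumptionUniqueSol})''; your alternative --- that if $I-\tfrac h2 Df(\overline{x})$ were singular, pairing the linear relation with a left eigenvector for the eigenvalue $2/h$ gives $0=2v$ --- is cleaner and avoids the delicate point that injectivity of a smooth map does not by itself force its Jacobian to be invertible. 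In (d), the paper expands the truncation error about the midpoint $\xi=\tfrac12(F(h)(x)+x)$ and exploits the symmetry of the two $\varphi$-terms there, whereas you expand about $x$ and match the $h^2$ coefficient $\tfrac12 Df(x)f(x)$ of the exact flow directly; both yield $E(h)=O(h^3)$, and your appeal to the even-order property of symmetric schemes is exactly the alternative route the paper itself records in the remark following the theorem. One small caution: your Cayley-transform argument tacitly assumes $Df(\overline{x})$ has a full set of (left) eigenvectors when transferring eigenvalues to $\partial_x F(h,\overline{x})$; since $\partial_x F(h,\overline{x})$ is an explicit rational function of $Df(\overline{x})$, the spectral mapping theorem covers the defective case as well, and it would be worth saying so (the paper's left-eigenvector computation has the same implicit gap).
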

\begin{proof}
The assumption (\ref{AssumptionUniqueSol}) implies that the NSFD scheme (\ref{NonLocGen}) can be written in the explicit form  (\ref{NumScheme}) where where the function $F$ satisfies the equation
\begin{equation}\label{EqF}
F(h,x)-x=\frac{h}{2}\left[\varphi(F\left(h,x),x\right)+\varphi\left(x,F(h,x)\right)\right].
\end{equation}
\noindent a) If in the equation (\ref{NonLocGen}) we interchange $x_k$ and $x_{k+1}$ and replace $h$ by $-h$, the equation remains the same. Therefore (\ref{reverse}) holds, which means that the NSFD scheme is time-reversible.\\

\noindent b) It is clear that $\overline{x}$ is a fixed point of (\ref{NonLocGen}) if, and only if, $\varphi(\overline{x},\overline{x})=0$, \textit{i.e.}  $\overline{x}$ is an equilibrium point of (\ref{CDSNonLoc}).\\

\noindent c) Let us use the explicit form (\ref{NumScheme}) of (\ref{NonLocGen}).
From (\ref{EqF}), the Jacobian matrix (in the $x$ variable) of $F$ satisfies
\begin{eqnarray}
\frac{\partial F(h,x)}{\partial x}-I&=&\frac{h}{2}\left(
\frac{\partial\varphi}{\partial y}(F(h,x),x)\frac{\partial F(h,x)}{\partial x}+\frac{\partial\varphi}{\partial z}(F(h,x),x)\right.\nonumber\\
&&\ \ \ \ \left. +\frac{\partial\varphi}{\partial y}(x,F(h,x))+\frac{\partial\varphi}{\partial z}(x,F(h,x))\frac{\partial F(h,x)}{\partial x}\right)\label{EqF2}
\end{eqnarray}
Let $\overline{x}$ be an arbitrary fixed point of (\ref{NonLocGen}). At $x=\overline{x}$, using $F(h,\overline{x})=\overline{x}$, equation (\ref{EqF2}) simplifies to
\[
\left(I-\frac{h}{2}\left(\frac{\partial\varphi}{\partial y}(\overline{x},\overline{x})+\frac{\partial\varphi}{\partial z}(\overline{x},\overline{x})\right)\right)\frac{\partial F(h,\overline{x})}{\partial x}=
I+\frac{h}{2}\left(\frac{\partial\varphi}{\partial y}(\overline{x},\overline{x})+\frac{\partial\varphi}{\partial z}(\overline{x},\overline{x})\right),
\]
or. equivalently,
\begin{equation}\label{eqJac}
\left(I-\frac{h}{2}\frac{\partial f(\overline{x})}{\partial x}\right)\frac{\partial F(h,\overline{x})}{\partial x}=
I+\frac{h}{2}\frac{\partial f(\overline{x})}{\partial x}.
\end{equation}

Let $\lambda\equiv \Re\lambda +\imath\Im\lambda$ be an eigenvalue of $\displaystyle\frac{\partial f(\overline{x})}{\partial x}$ with associated left eigenvector $v$. Then multiplying both sides of \eqref{eqJac}    on the left by $v$ we obtain
\[
\left(1-\frac{h}{2}\lambda\right)v\,\frac{\partial F(h,\overline{x})}{\partial x} = \left(1+\frac{h}{2}\lambda\right)v
\]
It follows from \eqref{AssumptionUniqueSol} that the matrix $\displaystyle\frac{\partial F(h,\overline {x})}{\partial x}$ is not singular. Then, due to the obvious impossibility for $1-\frac{h}{2}\lambda$ and $1+\frac{h}{2}\lambda$ to be both zero, neither of them is. Hence
\[
v\,\frac{\partial F(h,\overline{x})}{\partial x} =\frac{1+\frac{h}{2}\lambda}{1-\frac{h}{2}\lambda}\, v.
\]
Therefore, to every eigenvalue $\lambda$ of $\displaystyle\frac{\partial f(\overline{x})}{\partial x}$ with associated left eigenvector $v$, there corresponds an eigenvalue $\displaystyle\mu=\frac{1+\frac{h}{2}\lambda}{1-\frac{h}{2}\lambda}$ of the matrix $\displaystyle\frac{\partial F(h,\overline{x})}{\partial x}$ with the same left eigenvector $v$. After some technical manipulation we obtain
\begin{eqnarray}
|\mu|^2&=&\frac{(1+\frac{h}{2}\Re(\lambda))^2+\frac{h^2}{4}\Im^2(\lambda)}{(1-\frac{h}{2}\Re(\lambda))^2+\frac{h^2}{4}\Im^2(\lambda)}\nonumber\\
&=&\frac{1+\frac{h^2}{4}|\lambda|^2+h\Re(\lambda)}{1+\frac{h^2}{4}|\lambda|^2-h \Re(\lambda)}\label{mulambda}
\end{eqnarray}
If follows from (\ref{mulambda}) that
\[
\Re(\lambda)<0\ \ \Longleftrightarrow \ \ |\mu|<1,
\]
which implies that a hyperbolic equilibrium point $\overline{x}$ of (\ref{CDSNonLoc}) is asymptotically stable if, and only if, it is asymptotically stable as a fixed-point of (\ref{NonLocGen}).\\

\noindent d)  Let $x\in\Omega$ and let $u$ denote the solution of (\ref{ODEs}) with $u(0)=x$. Denote
\[
\xi=\frac{1}{2}(F(h)(x)+x) .
\]
Using the consistency conditions (\ref{consistency}) one can show that
\begin{equation}\label{eq11}
\xi=\frac{1}{2}\left(x+F(0,x)+\frac{dF}{dh}(0,x)+O(h^2)\right)=x+\frac{h}{2}f(x)+O(h^2)=u\left(\frac{h}{2}\right)+O(h^2)
\end{equation}
Then, for  the local truncation error,
\[
E(h)=u(h)-F(h,x)=u(h)-x-\frac{h}{2}\varphi(F(h)(x),x)-\frac{h}{2}\varphi(x,F(h)(x)),
\]
of the NSFD scheme (\ref{NumScheme}) \& (\ref{EqF}), Taylor expansions of $u$ about $\frac{h}{2}$ and of $\varphi$ about $(y,z)=(\xi,\xi)$ yield
\begin{eqnarray*}
E(h)&=&h\dot{u}\left(\frac{h}{2}\right)-\frac{h}{2}\left(\varphi(\xi,\xi)+\frac{\partial \varphi(\xi,\xi)}{\partial y}(F(h)(x)-\xi)+\frac{\partial \varphi(\xi,\xi)}{\partial z}(x-\xi)\right)\\
&&-\frac{h}{2}\left(\varphi(\xi,\xi)+\frac{\partial \varphi(\xi,\xi)}{\partial y}(x-\xi)+\frac{\partial \varphi(\xi,\xi)}{\partial z}(F(h)(x)-\xi)\right)+O(h^3)\\
&=&h\left(f\left(u\left(\frac{h}{2}\right)\right)-f(\xi)\right)+O(h^3).
\end{eqnarray*}
Then using (\ref{eq11}) we obtain $E(h)=O(h^3)$ and second order accuracy follows from the standard theory for one-step numerical methods.
\end{proof}

\begin{remark} Since the scheme (\ref{NonLocGen}) is symmetric, the statement d) of the Theorem can be derived from the general theory of symmetric schemes \cite[Theorem II.3.2]{Hairer2006}.
\end{remark}

\section{Domain preserving property of reversible schemes}

The theory for continuous dynamical systems of the form (\ref{ODEs}) provides theorems proving that a set is forward invariant through conditions only on the boundary of the set, e.g. \cite[Section 10]{Walter}. In this section we derive similar theorems for reversible maps, that is appropriate assumptions on the boundary of a set imply that the set is forward invariant.

Let $D$ be a closed subset of $\Omega$. We denote by $\overset{\circ}{D}$ the interior of $D$, by $\partial D$ - the boundary of $D$ and assume that
\begin{equation}\label{closureD}
D=closure(\overset{\circ}{D})=\overset{\circ}{D}\cup\partial D.
\end{equation}

\begin{theorem}\label{theoDomain1}
Let a scheme (\ref{NumScheme}) be reversible for all
$h\in(0,\overline{h}]$. If
\begin{equation}\label{cond1}
F(-h,\partial D)\cap\overset{\circ}{D}=\emptyset,\ h\in (0,\overline{h}],
\end{equation}
then the set $D$ is invariant under $F(h,\cdot)$ for every $h\in(0,\overline{h})$.
\end{theorem}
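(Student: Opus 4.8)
The plan is to argue by contradiction, exploiting reversibility to "run the scheme backwards" from an interior point that has supposedly escaped $D$. Suppose $D$ is not invariant under $F(h,\cdot)$ for some fixed $h\in(0,\overline{h})$, i.e. there is $x\in D$ with $y=F(h,x)\notin D$. Since $F(h,\cdot)$ is continuous and $D$ satisfies \eqref{closureD}, we may as well take $x\in\overset{\circ}{D}$: indeed if the only escaping points lie on $\partial D$, a small perturbation into $\overset{\circ}{D}$ still maps outside $D$ by continuity (here one uses that $y\notin D$ is an open condition, since $D$ is closed). So without loss of generality $x\in\overset{\circ}{D}$ and $y\notin D$.

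Next I would use a connectedness/intermediate-value argument along the segment, or more precisely along the orbit, to locate a first exit through the boundary. Consider the path $s\mapsto F(sh,x)$ for $s\in[0,1]$; at $s=0$ it equals $x\in\overset{\circ}{D}$, at $s=1$ it equals $y\notin D$. By continuity of $F$ in $h$ there is a largest $s^\ast\in(0,1]$ such that $F(s^\ast h,x)\in D$; by closedness of $D$ this supremum is attained, and the point $z:=F(s^\ast h, x)$ must lie on $\partial D$ (it cannot lie in $\overset{\circ}{D}$, else points slightly beyond $s^\ast$ would still be in $D$). Set $h':=s^\ast h\in(0,\overline{h}]$, so $z=F(h',x)\in\partial D$ with $x\in\overset{\circ}{D}$.

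Now apply reversibility with step $h'\in(0,\overline{h}]$: from $z=F(h',x)$ we get $x=F(-h',z)$. But $z\in\partial D$ and $x\in\overset{\circ}{D}$, so $F(-h',z)\in\overset{\circ}{D}$, which means $F(-h',\partial D)\cap\overset{\circ}{D}\ni x$, contradicting hypothesis \eqref{cond1}. This contradiction shows that no such $x$ exists, hence $F(h,D)\subseteq D$ for every $h\in(0,\overline{h})$.

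The main obstacle — and the step deserving the most care — is the reduction to an interior initial point together with the "first exit" argument: one must be sure that the exit point $z$ genuinely lands on $\partial D$ and that the corresponding step $h'$ stays in the admissible range $(0,\overline h]$, which is why it is convenient to track the escape along the family $F(sh,x)$ rather than along a straight segment in $\Omega$. One should also check that the case distinction handles $x\in\partial D$ cleanly: if $x\in\partial D$ and $F(h,x)\notin D$, a direct appeal to reversibility at step $h$ would require knowing $F(h,x)$ relative to $\overset{\circ}{D}$, so the perturbation into $\overset{\circ}{D}$ (legitimate by \eqref{closureD} and the openness of the complement of $D$) is the clean way to start. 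Everything else is routine continuity.
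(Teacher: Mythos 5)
Your proof is correct and follows essentially the same route as the paper's: start from an interior point, locate a boundary crossing of the path $s\mapsto F(sh,x)$ by continuity of $F$ in $h$, and invoke reversibility at that intermediate step to contradict \eqref{cond1}. The only cosmetic difference is that you handle starting points on $\partial D$ by perturbing into $\overset{\circ}{D}$ at the outset (using \eqref{closureD} and the openness of the complement of $D$), whereas the paper first proves $F(h,\overset{\circ}{D})\subseteq\overset{\circ}{D}$ and then passes to the closure at the end.
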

\begin{proof}
Let $x\in\overset{\circ}{D}$. Assume that there exists $h\in(0,\overline{h})$ such that $F(h,x)\notin \overset{\circ}{D}$. Since $F(h,x)$ is continuous on $h$, there exists $\hat{h}\in(0,h]$ such that $y=F(\hat{h},x)\in\partial D$. Equivalently, $x=F(-\hat{h},y)$, which contradicts the condition (\ref{cond1}). Therefore, $F(h,x)\in \overset{\circ}{D}$. Considering that $x$ is an arbitrary element of $\overset{\circ}{D}$, we have $F(h,\overset{\circ}{D})\subseteq \overset{\circ}{D}$. Using the continuity of $F$ on $x$ and (\ref{closureD}) we obtain $F(h,D)\subseteq D$.
\end{proof}\\

Condition (\ref{cond1}) is not easy to verify. It can be replaced by a stronger condition, requiring that the vector $F(-h,x)-x$ be outward directed or tangential to $D$ at $x\in\partial D$. This condition is similar to the tangent condition in  \cite[Section 10.XV]{Walter}, which is the essential property for invariance of sets under flows. In the setting of the system \eqref{ODEs} for the set $D$ the tangential condition states that
\begin{equation}\label{tangentC}
n(x)\cdot f(x)\leq 0,
\end{equation}
for every point $x\in\partial D$ and any outer normal vector $n(x)$ at $x$. Let us recall that a vector $n(x)$ is called outer normal vector to $D$ at $x$ if the circle with center $x+n(x)$ and radius $|n(x)|$ has no intersection with $\overset{\circ}{D}$. The straight line through $x$ perpendicular to $n(x)$ is called a tangent. This general definition of normal vector and tangent does not require any smoothness of the boundary $\partial D$, which is rather convenient in applications. Let us note that the normal vector $n(x)$, similarly the associated tangent, need not exist and, if existing, need not be unique.

In the next theorem we make an additional assumption on $D$ to be convex. Convexity of $D$ implies that a normal vector and a tangent exist at every point $x\in\partial D$. Further, a defining property of $D$ being convex is that $\overset{\circ}{D}$ is on one side of any tangent at any point $x\in\partial D$. Specifically, on the side not containing $x+n(x)$.

\begin{theorem}\label{theoDomain}
Let a scheme (\ref{NumScheme}) be reversible for all
$h\in(0,\overline{h}]$ and let $D$ be a closed and convex subset of $\Omega$ satisfying \eqref{closureD}.  If for every $h\in(0,\overline{h}]$, $x\in\partial D$ and an outer normal vector $n(x)$ we have
\begin{equation}\label{tangentC2}
n(x)\cdot (F(-h,x)-x)\geq 0,
\end{equation}
then the set $D$ is invariant under $F(h,\cdot)$ for every $h\in(0,\overline{h})$.
\end{theorem}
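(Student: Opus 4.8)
The plan is to reduce Theorem~\ref{theoDomain} to the previous Theorem~\ref{theoDomain1}. That is, I would show that the tangential-type hypothesis \eqref{tangentC2} implies the weaker condition \eqref{cond1}, namely that $F(-h,\partial D)\cap\overset{\circ}{D}=\emptyset$ for all $h\in(0,\overline{h}]$. Once that implication is established, the conclusion that $D$ is invariant under $F(h,\cdot)$ follows directly from Theorem~\ref{theoDomain1}, since the reversibility hypothesis and the standing assumption \eqref{closureD} on $D$ are shared between the two statements.

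First I would fix $h\in(0,\overline{h}]$, a point $x\in\partial D$, and an outer normal vector $n(x)$ at $x$; such a vector exists because $D$ is convex. The key geometric fact I would invoke is the one recalled just before the theorem: by convexity, $\overset{\circ}{D}$ lies entirely in the open half-space $\{\,z : n(x)\cdot(z-x) < 0\,\}$, i.e.\ on the side of the tangent hyperplane through $x$ not containing $x+n(x)$. Now let $y=F(-h,x)$. Hypothesis \eqref{tangentC2} says exactly that $n(x)\cdot(y-x)\geq 0$, so $y$ lies in the closed complementary half-space $\{\,z : n(x)\cdot(z-x)\geq 0\,\}$, which is disjoint from $\overset{\circ}{D}$. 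Hence $y=F(-h,x)\notin\overset{\circ}{D}$. Since $x\in\partial D$ and $h\in(0,\overline h]$ were arbitrary, this gives $F(-h,\partial D)\cap\overset{\circ}{D}=\emptyset$ for every $h\in(0,\overline h]$, which is precisely \eqref{cond1}.

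With \eqref{cond1} verified, I would apply Theorem~\ref{theoDomain1} verbatim: the scheme is reversible for all $h\in(0,\overline h]$, $D$ satisfies \eqref{closureD}, and \eqref{cond1} holds, so $D$ is invariant under $F(h,\cdot)$ for every $h\in(0,\overline{h})$, completing the proof.

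I expect the main (and really the only) subtle point to be the correct use of the separation property of convex sets with the general, smoothness-free notion of outer normal vector used in the paper: one must be careful that ``$\overset{\circ}{D}$ is strictly on one side'' is exactly what rules out $F(-h,x)$ landing in the interior, and that the inequality directions in \eqref{tangentC2} and in the half-space description of $\overset{\circ}{D}$ are genuinely complementary (closed half-space containing $x+n(x)$ versus open half-space not containing it, meeting only along the tangent hyperplane, which itself misses $\overset{\circ}{D}$). Everything else is a direct citation of Theorem~\ref{theoDomain1}; no computation is needed.
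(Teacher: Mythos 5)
Your proposal is correct and follows essentially the same route as the paper: use convexity to place $\overset{\circ}{D}$ strictly on the side of the tangent hyperplane at $x$ not containing $x+n(x)$, observe that \eqref{tangentC2} puts $F(-h,x)$ in the complementary closed half-space so that $F(-h,x)\notin\overset{\circ}{D}$, and then invoke Theorem~\ref{theoDomain1}. Your write-up is in fact slightly more explicit than the paper's about the half-space separation, but the argument is the same.
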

\begin{proof}
Let $h\in(0,\overline{h}]$ and $x\partial D$. The inequality \eqref{tangentC2} implies that the point $x+(F(-h,x)-x)$ is on the same side as $x+n(x)$ with respect to the tangent to $D$  at $x$ and associated with $n(x)$. Considering that $D$ is convex, this implies that $F(-h,x)\notin \overset{\circ}{D}$. Then the conclusion follows from Theorem \ref{theoDomain1}.
\end{proof}\\





\section{Schemes for mass action models}

Modeling of biological and chemical processes by applying the mass action principle for representing the interaction of the involved species typically results in a system of the form (\ref{CDSNonLoc}) in $\Omega$, a convex and compact subset of  $\mathbb{R}^n_+$, where the function $\varphi$ is linear in both its arguments. Then $\varphi$ can be represented as
\begin{equation}\label{varphiMA}
\varphi(x,y)=P(x)y+A(x+y)+b=Q(y)x+A(x+y)+b
\end{equation}
where $P$ and $Q$ are $n\times n$ matrix functions of $x$ and $y$, respectively. The matrix $A$ and the vector $b$ are constant.
In this particular setting the numerical method (\ref{NonLocGen}) can be written in the form
\begin{equation}\label{NonLocMA1}
\frac{x_{k+1}-x_k}{h}=\frac{1}{2}(P(x_k)x_{k+1}+Q(x_k)x_{k+1})+A(x_{k+1}+x_k)+b
\end{equation}
or, equivalently,
\begin{equation}\label{NonLocMA2}
\left(I-\frac{h}{2}\left(P(x_k)+Q(x_k)\right)-hA\right)x_{k+1}=(I+hA)x_k+hb,
\end{equation}
where $I$ is the $n\times n$ identity matrix. Furthermore, and also expressing it reversibility, the equation  (\ref{NonLocMA1}) can be written in the form
\begin{equation}\label{NonLocMA3}
\left(I+\frac{h}{2}\left(P(x_{k+1})+Q(x_{k+1})\right)+hA\right)x_{k}=(I-hA)x_{k+1}-hb.
\end{equation}

Considering that $\Omega$ is compact, there exists $\overline{h}$ such that
\begin{equation}\label{diagDom}
\begin{aligned}
&I-\frac{h}{2}(P(x)+Q(x))-hA \ \text{ and }\  I+\frac{h}{2}(P(x)+Q(x))+hA \\
&\text{are both (row or column) diagonally dominant matrices}\\
&\text{ for all } x\in(0,\overline{h}] \text{ and }x\in\Omega.
\end{aligned}
\end{equation}

 Then, these matrices are invertible and (\ref{NonLocMA2}) can be written explicitly as
\begin{equation}\label{NonLocMA1Exp}
x_{k+1}=F(h,x_k)
\end{equation}
where
\begin{equation}\label{FMA}
F(h,x)=\left(I-\frac{h}{2}(P(x)+Q(x))-hA\right)^{-1}((I+hA)x+hb).
\end{equation}
Further, we have
\begin{equation}\label{FinvMA}
F^{-1}(h,x)=F(-h,x)=\left(I+\frac{h}{2}(P(x)+Q(x))+hA\right)^{-1}((I-hA)x-hb).
\end{equation}

\begin{theorem}\label{theoMA}
Let there exist $\bar{h}>0$ such that \eqref{diagDom} holds and $\Omega$ is an invariant set of the map $F(h,\cdot)$ for all $h\in(0,\overline{h}]$. Then the scheme \eqref{NonLocMA1} satisfies the properties a)--d) in Theorem \ref{theo1}.
\end{theorem}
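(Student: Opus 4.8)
The plan is to deduce Theorem \ref{theoMA} from Theorem \ref{theo1} by checking that the mass-action hypotheses, namely the diagonal dominance \eqref{diagDom} together with the invariance of $\Omega$ under $F(h,\cdot)$, imply the standing assumption \eqref{AssumptionUniqueSol}. Once \eqref{AssumptionUniqueSol} is in force, there is nothing left to do: the scheme \eqref{NonLocMA1} is exactly the scheme \eqref{NonLocGen} for the choice of $\varphi$ in \eqref{varphiMA}, so properties a)--d) are precisely the conclusions of Theorem \ref{theo1}. Thus the whole proof reduces to verifying the two clauses (i) and (ii) of \eqref{AssumptionUniqueSol}.

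For clause (i), I would argue as follows. By \eqref{diagDom} the matrix $I-\tfrac{h}{2}(P(x_k)+Q(x_k))-hA$ is diagonally dominant, hence invertible, for every $x_k\in\Omega$ and every $h\in(0,\overline h]$; therefore \eqref{NonLocMA2}, which is an equivalent rewriting of \eqref{NonLocMA1}, has the unique solution $x_{k+1}=F(h,x_k)\in\mathbb{R}^n$ with $F$ given by \eqref{FMA}. The hypothesis that $\Omega$ is invariant under $F(h,\cdot)$ then places $x_{k+1}$ in $\Omega$, and since this solution is already unique in all of $\mathbb{R}^n$ it is, a fortiori, the unique solution lying in $\Omega$. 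This is exactly \eqref{AssumptionUniqueSol}(i).

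For clause (ii), i.e. injectivity of $F(h,\cdot)$ on $\Omega$, I would use the second representation in \eqref{varphiMA} for one of the two nonlocal terms in \eqref{NonLocGen} and the first for the other, so that \eqref{NonLocMA1} is rewritten in the reversed form \eqref{NonLocMA3}. By \eqref{diagDom} the matrix $I+\tfrac{h}{2}(P(x_{k+1})+Q(x_{k+1}))+hA$ is invertible at $x_{k+1}\in\Omega$, so \eqref{NonLocMA3} gives the identity $x_k=F(-h,x_{k+1})$ with $F(-h,\cdot)$ as in \eqref{FinvMA}. Consequently, if $x_k,\tilde x_k\in\Omega$ and $F(h,x_k)=F(h,\tilde x_k)=:y\in\Omega$, then applying $F(-h,\cdot)$ to $y$ yields $x_k=F(-h,y)=\tilde x_k$, so distinct data give distinct iterates; this also re-establishes reversibility concretely in the mass-action setting, via the explicit pair \eqref{FMA}--\eqref{FinvMA}.

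With \eqref{AssumptionUniqueSol} established, Theorem \ref{theo1} applies directly and gives a)--d). I do not expect a genuine obstacle here; the only point requiring care is that the invariance of $\Omega$ be used and not merely diagonal dominance: diagonal dominance alone makes $F(h,\cdot)$ a well-defined map on $\mathbb{R}^n$, but \eqref{AssumptionUniqueSol} asks for a generalized dynamical system on $\Omega$, and it is precisely the invariance hypothesis that confines the iterates to $\Omega$ and thereby legitimizes evaluating the invertible matrix of \eqref{NonLocMA3} at the point $x_{k+1}\in\Omega$ in the injectivity step. All the substantive content (second order, elementary stability, correspondence of fixed points and equilibria) has already been carried out abstractly in Theorem \ref{theo1}.
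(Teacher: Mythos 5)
Your proof is correct and follows the same route as the paper's: reduce to Theorem \ref{theo1} by verifying \eqref{AssumptionUniqueSol}, using the invertibility coming from \eqref{diagDom} for existence and uniqueness and the invariance hypothesis to keep the iterate in $\Omega$. You are in fact more complete than the paper, whose proof only addresses clause (i) of \eqref{AssumptionUniqueSol}; your injectivity argument via the reversed form \eqref{NonLocMA3} — where the invariance of $\Omega$ is needed a second time, to ensure the matrix $I+\tfrac{h}{2}(P(x_{k+1})+Q(x_{k+1}))+hA$ is evaluated at a point of $\Omega$ where \eqref{diagDom} guarantees its invertibility — supplies the step for clause (ii) that the paper leaves implicit.
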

\begin{proof}
The scheme \eqref{NonLocMA1} is constructed in the general form \eqref{NonLocGen}. Therefore, it is sufficient to show that condition \eqref{AssumptionUniqueSol} holds. The solution is given in the explicit form \eqref{NonLocMA1Exp}. Therefore, it exists for every $h\in(0,\overline{h}]$ and it is unique. Further, it belongs to $\Omega$, due to the assumption that $\Omega$ is invariant.
\end{proof}

\begin{remark}
The existence of $\bar{h}$ such \eqref{diagDom} holds follows from the compactness of $\Omega$. In practice, it is determined from the condition of diagonal dominance using the boundedness of the variable $x$.
\end{remark}
\begin{remark}
Theorem \ref{theoDomain} is a useful tool in determining the the invariance of $\Omega$ under $F(h,\cdot)$.
\end{remark}
\begin{remark}
It very common in applications that $b\geq 0$ and the nondiagonal entries of $A$, $P(x)$, $Q(x)$, $x\in\Omega$ are nonnegative, that is, these matrices are Metzler matrices. Then, assuming that \eqref{diagDom} holds,
\[
I-\frac{h}{2}(P(x)+Q(x))-hA
\]
is an M-matrix. Hence, its inverse is a nonnegative matrix, \cite[Theorem 2.3]{BermonPlemmons1994}. Therefore, it follows from \eqref{NonLocMA1Exp} that
\begin{equation}\label{positivityProp}
\Omega\subseteq \mathbb{R}^n_+\ \Longrightarrow \
F(h,\Omega)\subseteq \mathbb{R}^n_+, \ h\in(0,\bar{h}].
\end{equation}
The property \eqref{positivityProp} is usually a first step towards proving the invariance of $\Omega$ under $F(h,\cdot)$, $h\in(0,\overline{h}]$.
\end{remark}


\section{Application to a model in Mathematical Epidemiology}

In this section we show how the theory and tools developed in Sections 2, 3 and 4 can be put together in deriving  second order scheme with the properties a)--c) in Theorem \ref{theo1}. We note that properties a)--c) are of qualitative nature and do not follows from the standard numerical analysis theory involving stability, consistency and therefore convergence. While convergence as $h\to 0$ is expected, the computations are always done for some positive value of $h$. Hence, the importance that the main characteristics of the dynamical system are preserved as given in properties a)--c) for $h\in(0,\overline{h}]$. While the theoretical existence of $\overline{h}$ is important, deriving constructively value of $\overline{h}$ is critical for any specific application. We show how the value of $\overline{h}$  can be computed for the considered model. It should be noted that, while the numerical approach proposed in this paper is exemplified on the model considered in this section, its realm of application is not limited to it.

We consider the model of vector borne diseases with temporal immunity as given in \cite[Section 4.4, (4.39)--(4.40)]{Martcheva}
\begin{eqnarray}
\frac{dS_v}{dt}&=&\Lambda_v-pS_v I-\mu_v S_v,\label{Mart1}\\
\frac{dI_v}{dt}&=&pS_vI-\mu_vI_v,\label{Mart2}\\
\frac{dS}{dt}&=&\Lambda-qSI_v-\mu S+\gamma R,\label{Mart3}\\
\frac{dI}{dt}&=&qSI_v-(\mu+\alpha)I,\label{Mart4}\\
\frac{dR}{dt}&=&\alpha I-(\mu+\gamma) R,\label{Mart5}
\end{eqnarray}
where $S_v$ and $I_v$ are the susceptible and infective vectors and $S$, $I$, $R$ are the susceptible, invective and recovered (with temporary immunity) hosts, e.g. humans. All parameters are positive. Bearing in mind the units $\Lambda_v$, $\Lambda$ are the recruitments, $\mu_v$, $\mu$ - the death rates for vectors and hosts, respectively, $p$ and $q$ are the probabilities of sufficient contact for transmission from host to vector and from vector to host, the average period of infectivity of host is $\alpha^{-1}$, while the average duration of immunity of host is $\gamma^{-1}$. Let us note that an additional parameter $a$ in \cite{Martcheva} is absorbed here in the parameters $p$ and $q$.

Let $x=(S_v,I_v,S,I,R)^T$. Then the system \eqref{Mart1}--\eqref{Mart5} can be stated in the form \eqref{CDSNonLoc} with $\varphi$ given by \eqref{varphiMA}, where
\begin{equation}\label{ExPQAb}
\begin{aligned}
&P(x)=\left(\begin{matrix}-pI&0&0&0&0\\
pI&0&0&0&0\\
0&0&-qI_v&0&0\\
0&0&qI_v&0&0\\
0&0&0&0&0
\end{matrix}\right),\ \
Q(x)=\left(\begin{matrix}0&0&0&-pS_v&0\\
0&0&0&pS_v&0\\
0&-qS&0&0&0\\
0&qS&0&0&0\\
0&0&0&0&0
\end{matrix}\right),\\[6pt]
&
A=\left(\begin{matrix}-\mu_v&0&0&0&0\\
0&-\mu_v&0&0&0\\
0&0&-\mu&0&\gamma\\
0&0&0&-(\mu+\alpha)&0\\
0&0&0&\alpha&-(\mu+\gamma)
\end{matrix}\right),\ \
b=\left(\begin{matrix}\Lambda_v\\0\\\Lambda\\0\\0
\end{matrix}\right).
\end{aligned}
\end{equation}

The set of equations \eqref{Mart1}--\eqref{Mart5} defines a dynamical system on
\[
\Omega=\{x\in\mathbb{R}^5_+:S_v+I_v\leq M_v,S+I+R\leq M\},
\]
where $M_v$ and $M$ are real numbers such that
\begin{equation}\label{lowBoundsM}
M_v\geq \frac{\Lambda_v}{\mu_v},\ \ M\geq\frac{\Lambda}{\mu}.
\end{equation}
It is easy so see, and it is also shown in \cite{Martcheva}, that for the populations $N_v=S_v+I_v$, $N=S+I+R$ of vectors and hosts, respectively, we have
\begin{equation}\label{limitsN}
\lim_{t\to+\infty}N_v(t)=\frac{\Lambda_v}{\mu_v},\ \ \lim_{t\to+\infty}N=\frac{\Lambda}{\mu}.
\end{equation}
Hence, the inequalities \eqref{lowBoundsM} are important. The limit properties \eqref{limitsN} are used in \cite{Martcheva} to reduce the asymptotic analysis of \eqref{Mart1}--\eqref{Mart5} to asymptotic analysis of a  system to two equations. We consider here the full system \eqref{Mart1}--\eqref{Mart5}, since the goal is to compute accurately the trajectories for all finite times and not only their limit at $t\to+\infty$. Further, this illustrates that the applicability of the proposed numerical approach is not restricted in any way by the dimensionality of the system.

The vector field $f(x)=\phi(x,x)$, defined by right hand side of \eqref{Mart1}--\eqref{Mart5} satisfies the tangential condition in \cite[Section 10.XV]{Walter}. Specifically, on the planes
\begin{eqnarray}
 &&S_v+I_v=M_v,\label{plane1}\\
 &&S+I+R=M,\label{plane2}
\end{eqnarray}
with respective outward normal vectors $u=(1,1,0,0,0)^T$ and $w=(0,0,1,1,1)^T$, we have
\begin{eqnarray}
u\cdot f(x)&=&\Lambda_v-\mu_v (S_v+I_v)=\Lambda_v-\mu M_v\leq 0,\label{ufleq0}\\
w\cdot f(x)&=&\Lambda-\mu(S+I+R)=\Lambda-\mu M\leq 0.\label{wfleq0}
\end{eqnarray}

We consider the numerical method \eqref{NonLocMA1Exp} for the system \eqref{Mart1}--\eqref{Mart5} on $\Omega$. Our aim is apply Theorem \ref{theoMA}. From the structure of the matrices  $P$, $Q$ and $A$ it follows that the matrix $\displaystyle I-\frac{h}{2}(P(x)+Q(x))-hA$ is column diagonally dominant for every $h> 0$ and $x\geq 0$. The column diagonal dominance of the matrix $\displaystyle I+\frac{h}{2}(P(x)+Q(x))+hA$ is equivalent to the following set of inequalities:
\begin{equation}\label{ineq}
\begin{aligned}
h(pI+\mu_v)&\leq&1,\\
h(qS+\mu_v&\leq&1,\\
h(qI_v+\mu)&\leq&1,\\
h(pS_v+\mu+\alpha)&\leq&1,\\
h(\mu+2\gamma)&\leq&1.
\end{aligned}
\end{equation}
Therefore, condition \eqref{diagDom} holds for all $x\in\Omega$ and $h\in(0,\overline{h}]$, where
\begin{equation}\label{bar_hMA}
\overline{h}=\min\left\{\frac{1}{pM+\mu_v},\frac{1}{qM+\mu_v}, \frac{1}{qM_v+\mu},\frac{1}{pM_v+\mu+\alpha},\frac{1}{\mu+2\gamma}\right\}.
\end{equation}

It remains to show the invariance of $\Omega$. Considering that $P$, $Q$, $A$ are Metzler matrices, we have that \eqref{positivityProp} holds. Therefore, it is sufficient to apply Theorem \ref{theoDomain} on the part of the boundary defined by planes \eqref{plane1}--\eqref{plane2}. Using \eqref{FinvMA}, we have
\begin{eqnarray}
&&\hspace{-1cm}\left(I+\frac{h}{2}(P(x)+Q(x))+hA\right)(F(-h,x)-x)\nonumber\\
&&=(I-hA)x-hb-(I+\frac{h}{2}(P(x)+Q(x))+hA)x\nonumber\\
&&=\frac{h}{2}(P(x)x+2Ax)+\frac{h}{2}(Q(x)x+2Ax\nonumber\\
&&=-\frac{h}{2}\varphi(x,x)-\frac{h}{2}\varphi(x,x)\ =\ -hf(x)\label{aux}
\end{eqnarray}

It is easy to see from the structure of $P$, $Q$ and $A$ in \eqref{ExPQAb} that
\begin{equation}\label{aux2}
u^TP=w^TP=u^TQ=w^TQ=(0,0,0,0,0)^T,\ \ u^TA=-\mu_vu,\ \ w^TA=-\mu w.
\end{equation}

Putting together \eqref{aux} and \eqref{aux2} we obtain
\begin{eqnarray*}
(1-h\mu_v)u\cdot(F(-h,x)-x)&=&u^T\left(I+\frac{h}{2}(P(x)+Q(x))+hA\right)(F(-h,x)-x)\\
&=&-hu\cdot f(x)\geq 0.
\end{eqnarray*}
Using that $h\leq\overline{h}<\frac{1}{\mu_v}$ and \eqref{ufleq0}, we have
\[
u\cdot(F(-h,x)-x)=-\frac{h}{1-h\mu_v}u\cdot f(x)\geq 0
\]
for all $x$ on the plane \eqref{plane1} and $h\in(0,\overline{h}]$. Similarly, we obtain
\[
w\cdot (F(-h,x)-x)=-\frac{h}{1-h\mu_v}u\cdot f(x)\geq 0
\]
for all $x$ on the plane \eqref{plane2} and $h\in(0,\overline{h}]$. Then it follows from Theorem \ref{theoDomain} that $\Omega$ is positively invariant under all maps $F(h,\cdot)$, $h\in(0,\overline{h}]$. Hence the numerical method \eqref{NonLocMA1Exp} for the system \eqref{Mart1}--\eqref{Mart5} with $\overline{h}$ given in \eqref{bar_hMA} satisfies properties a)--d) in Theorem \ref{theo1}.

\section{Conclusion}
The literature on high-order numerical methods for dynamical systems defined by ordinary differential equations is rich. However, these classical schemes can exhibit spurious/ghost solutions or other elementary instability that do not correspond to the feature of the continuous equations. We overcome this difficulty by constructing a time-reversible nonstandard finite difference (NSFD) scheme, with the classical denominator $h\equiv\Delta t$ for the discrete derivative but a nonlocal discretization of the right-hand side of the differential equations. Our findings, directly applicable to mass action-type models in biology, are twofold. First, the NSFD scheme is second-order convergent and elementary stable. Second, we formulate a discrete analogue of the tangent condition under which it is shown that the NSFD scheme preserves the propriety of the continuous model stating that its feasible region is forward invariant.

Our future research includes  extending this study to a dynamical system with nonhyperbolic equilibrium points, an approach considered in \cite{AngLubShillor2011} as far as the stability for one-dimensional model is concerned.

\end{document}